\theoremstyle{plain}
\newtheorem{thm}{Theorem}[section]
\newtheorem{prop}[thm]{Proposition}
\newtheorem{lem}[thm]{Lemma}
\theoremstyle{definition}
\theoremstyle{remark}
  \def\C{{\mathbb{C}}}           \def\N{{\mathbb{N}}}    \def\R{{\mathbb{R}}}  \def\T{{\mathbb{T}}}      \def\Z{{\mathbb{Z}}}
\newcommand\ev{\operatorname{ev}}
\newcommand\id{\operatorname{id}}
\renewcommand\Im{\operatorname{Im}}
\newcommand\Proj{\operatorname{Proj}}
\newcommand\Prob{\operatorname{Prob}}
\def\cc{{\curvearrowright}}
\newcommand{\actson}{\curvearrowright}
\newcommand{\actons}{\curvearrowright}
\newcommand{\ip}[1]{\langle #1 \rangle}
\begin{document}
\title{Weak Equivalence to Bernoulli Shifts for Some Algebraic Actions}      
\author{Ben Hayes}
\address{University of Virginia\\
         Kerchof Hall\\
         Charlottesville, VA 22904}
\email{brh5c@virginia.edu}\thanks{The author gratefully acknowledges support by  NSF Grant DMS-1600802.}\thanks{{\bf Keywords}: Weak equivalence, group von Neumann algebra, $\ell^{2}$ formal inverse.\\
{\bf MSC}:37A35,  47C15, 37A55, 37A15\\}
\date{\today}
\maketitle

\begin{abstract} Let $G$ be a countable, discrete group and $f\in M_{n}(\Z(G)).$ We continue our study of the connections between operator theoretic properties of $f$ as a convolution operator on $\ell^{2}(G)^{\oplus n}$ and the ergodic theoretic properties of the action of $G$ on the Pontryagin dual of $(\Z(G)^{\oplus n}/\Z(G)^{\oplus n}f),$ (denoted $X_{f}$). Namely, we prove that if $G$ is a countable, discrete group and $f\in M_{n}(\Z(G))$ is invertible on $\ell^{2}(G)^{\oplus n},$ but $f$ is not invertible in $M_{n}(\Z(G))$, then the measure-preserving action of $G$ on $X_{f}$ equipped with the Haar measure is weakly equivalent to a Bernoulli action. This explains some of the ``Bernoulli-like'' properties that  $G\cc X_{f}$ has. We shall in fact prove this weak equivalence in the case that $f$ has a ``formal inverse in $\ell^{2}$''.
\end{abstract}

\tableofcontents
\section{Introduction}
Let $G$ be a countable, discrete group. An \emph{algebraic action} of $G$ is an action $G\cc X$ by continuous automorphisms, where $X$ is a compact, metrizable, abelian group. We often consider this action as a probability measure-preserving action, giving $X$ the Haar measure $m_{X}$. The goal of this paper is to give examples of algebraic actions which are, in a precise sense, similar to Bernoulli shifts. In particular, we shall give examples related to invertible convolution operators. Given $f\in M_{m,n}(\C(G)),$  write $f_{lk}=\sum_{h\in G}\widehat{f_{lk}}(h)h$ for $1\leq l\leq m$, $1\leq k\leq n.$ We  define operators $\lambda(f)\colon c_{0}(G)^{\oplus n}\to c_{0}(G)^{\oplus m},$ and $r(f)\colon c_{o}(G)^{\oplus m}\to c_{0}(G)^{\oplus n}$ by
\[(\lambda(f)\xi)(l)(g)=\sum_{k=1}^{n}\sum_{h\in G}\widehat{f_{lk}}(h)\xi(k)(h^{-1}g), \mbox{for $g\in G,1\leq l \leq m,\xi\in c_{0}(G)^{\oplus n}$} \]
\[(r(f)\xi)(l)(g)=\sum_{k=1}^{m}\sum_{h\in G}\xi(k)(h)\widehat{f_{kl}}(h^{-1}g),\mbox{for $g\in G,1\leq l\leq n,\xi\in c_{0}(G)^{\oplus m}.$}\]

Note that $\lambda(f)(\ell^{p}(G)^{\oplus n})\subseteq \ell^{p}(G)^{\oplus m}$ for every $1\leq p\leq \infty,$  so we may also regard $\lambda(f)$ as an operator $\ell^{p}(G)^{\oplus n}\to \ell^{p}(G)^{\oplus m}.$ Similar remarks apply to $r(f).$ We also define $r(f)\colon \C(G)^{\oplus m}\to \C(G)^{\oplus n}$ by
\[(r(f)\alpha)(l)=\sum_{k=1}^{m}\alpha_{k}f_{kl}.\]
We let $X_{f}$ be the Pontryagin dual of $\Z(G)^{\oplus n}/r(f)(\Z(G)^{\oplus m})$, i.e. $X_{f}$ is the space of  group homomorphisms from $\Z(G)^{\oplus n}/r(f)(\Z(G)^{\oplus m})$ to $\T=\R/\Z$. We then have a natural action of $G$ on $X_{f}$ by
\[(g\theta)(a)=\theta(g^{-1}a),\mbox{ for $a\in \Z(G)^{\oplus n}/r(f)(\Z(G)^{\oplus m}),\theta\in X_{f},g\in G$}.\]
When $m=n=1,$ this is called a \emph{principal algebraic action} and has been studied by many authors (see \cite{BowenEntropy},\cite{Den},\cite{DenSchmidt},\cite{KerrLi2},\cite{LiSchmidtPet},\cite{LiThom},\cite{LindSchmidt1},\cite{LindSchmidt2}). We will call the case of $m=n$ (but not necessarily $m=n=1$) a \emph{balanced algebraic action}, essentially all of what can be said about principal algebraic actions can be said for balanced algebraic actions (see e.g. \cite{Me13},\cite{Me5},\cite{Me7},\cite{KerrLi2}). A principal theme through much of the recent work studying algebraic actions is that ergodic theoretic properties of $G\actson (X_{f},m_{X_{f}})$ (e.g. ergodicity, complete positive entropy when $G$, expansiveness) translate into operator theoretic properties of $\lambda(f).$ For example, when $m=n$ and  $\lambda(f)$ is invertible as an operator $\ell^{2}(G)^{\oplus n}\to \ell^{2}(G)^{\oplus n}$ the action $G\actson (X_{f},m_{X_{f}})$ shares many properties with Bernoulli shifts: it is ergodic (\cite{LiSchmidtPet}), in fact mixing (\cite{ChungLi}), the Koopman representation is isomorphic to an infinite direct sum of the left regular representation, and it has completely positive entropy when $G$ is assumed to be sofic (\cite{Me13}). We remark that if $m\ne n$ and $f\in M_{m,n}(\Z(G)),$ then it is not possible that $\lambda(f)$ is invertible $\ell^{2}(G)^{\oplus n}\to \ell^{2}(G)^{\oplus m}$  (this follows e.g. from \cite{Luck} Lemma 1.13 (5)). 

	The goal of this paper is to give a precise statement of the similarity of these actions to Bernoulli shifts: namely we show that they are \emph{weakly equivalent} to Bernoulli shifts.   Weak equivalence is related to the notion of weak containment introduced by Kechris (see Chapter II.10 of \cite{KechrisGA}). Suppose we are given a probability measure-preserving action $G\cc (X,\mu)$ and a measurable map $\phi\colon X\to A$ where $A$ is finite. For a finite $F\subseteq G$, define $\phi^{F}\colon X\to A^{F}$ by $\phi^{F}(x)(g)=\phi(g^{-1}x).$ We say that $G\cc (X,\mu)$ is weakly contained in another action $G\cc (Y,\nu)$ if for every finite set $A,$  every measurable $\phi\colon X\to A$, every finite $F\subseteq G,$ and every $\varepsilon>0,$ there is a measurable $\psi\colon Y\to A$  so that
\[\|\phi^{F}_{*}(\mu)-\psi^{F}_{*}(\nu)\|<\varepsilon,\]
where the norm in question is the total variation norm. Intuitively, any ``finitary piece'' of $G\cc (X,\mu)$ can be approximated arbitrarily well by some ``finitary piece'' of $G\cc (Y,\nu)$ (in many ways, this definition is analogous to weak containment of representations or finite representability of Banach spaces). We then say $G\cc (X,\mu),G\cc (Y,\nu)$ are weakly equivalent if $G\cc (X,\mu)$ is weakly contained in $G\cc (Y,\nu)$ and $G\cc (Y,\nu)$ is weakly contained in $G\cc (X,\mu).$ We mention that much of the interest of this problem comes from the question of whether or not $G\cc (X_{f},m_{X_{f}})$ is isomorphic to a Bernoulli shift when $f$ is invertible in $\ell^{1}(G)$ (see Conjecture 6.8 in \cite{LindSchmidt2} where this is conjectured when $G=\Z^{d}$). As mentioned before, many works have already shown that $G\cc (X_{f},m_{X_{f}})$ shares many properties of Bernoulli shift actions when $\lambda(f)$ is invertible  as an operator $\ell^{2}(G)^{\oplus n}\to \ell^{2}(G)^{\oplus n}$ (which is weaker than $f$ being invertible in $\ell^{1}(G)$).

We will actually prove something more general than just saying that $G\cc (X_{f},m_{X_{f}})$ is weakly contained in Bernoulli shifts if $\lambda(f)$ is invertible, we will prove this weak containment when $\lambda(f)$ just has an ``$\ell^{2}$ formal inverse.''
 Given $\xi\in M_{n}(\ell^{2}(G)),\alpha\in M_{n}(\C(G))$ we define $\alpha\xi,\xi\alpha\in M_{n}(\ell^{2}(G))$ by
\[(\alpha\xi)_{ij}=\sum_{k=1}^{n}\lambda(\alpha)_{ik}\xi_{kj}, (\xi \alpha)_{ij}=\sum_{k=1}^{n}r(\alpha)_{kj}\xi_{ik}. \]
We say that $x\in M_{n}(\C(G))$ has an \emph{$\ell^{2}$ formal inverse} if there is a $\xi\in M_{n}(\ell^{2}(G)))$ so that
\begin{equation}\label{E:inverseintro}
(x\xi)_{ij}=\delta_{i=j}\delta_{e},\,\,\,\,\, (\xi x)_{ij}=\delta_{i=j}\delta_{e},
\end{equation}
The concept of an $\ell^{2}$ formal inverse is related to the concept of a $c_{0}$ formal inverse that appears in \cite[Example 4.7]{BowenLi}. To the best of our knowledge, this is the first time this has appeared in the literature. As discussed in Section \ref{S:L2examples}, it is related to the algebra of measurable operators affiliated to the group von Neumann algebra defined, for example, in \cite{TakesakiII} IX.2 . This algebra is often used in functional analysis, but we will not need it for this paper. In Section \ref{S:L2examples}, we will show that if $\xi\in M_{n}(\ell^{2}(G))$ and either one of the equations in (\ref{E:inverseintro}) hold, then $\xi$ is an $\ell^{2}$ formal inverse to $x.$ 
From this, it follows that $x$ has an $\ell^{2}$ formal inverse if and only if $x(\ell^{2}(G)^{\oplus n})\supseteq c_{c}(G)^{\oplus n}.$ It is also true that if an $\ell^{2}$ formal inverse exists, then it is unique (see Section \ref{S:L2examples}).  Our main theorem is the following.

\begin{thm}\label{T:mainintro} Let $G$ be a countably infinite, discrete group and let $f\in M_{n}(\Z(G)).$ Suppose that $f$ has an $\ell^{2}$ formal inverse, but that $f\notin GL_{n}(\Z(G)).$ Then $G\cc (X_{f},m_{X_{f}})$ is weakly equivalent to a Bernoulli shift.
\end{thm}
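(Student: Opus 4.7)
The plan is to prove weak equivalence by establishing the two weak containments separately. For the direction Bernoulli $\prec G \cc (X_{f}, m_{X_{f}})$: by the Abert--Weiss theorem, every free p.m.p.\ action of a countably infinite discrete group weakly contains the Bernoulli shift $G \cc ([0,1]^{G}, \mathrm{Leb}^{\otimes G})$, so this reduces to showing the action is essentially free. Because $f \notin GL_{n}(\Z(G))$, the discrete group $\Z(G)^{\oplus n}/r(f)\Z(G)^{\oplus n}$ is infinite, and hence $X_{f}$ is an infinite compact abelian group. The $\ell^{2}$ formal inverse should force the Koopman representation on $L^{2}_{0}(X_{f})$ to be a multiple of the left regular representation of $G$ (extending the conclusion for $\ell^{2}$-invertible $f$ cited in the introduction); essential freeness then follows from the spectral properties of such a Koopman representation combined with the fact that fixed-point subgroups of non-identity $g \in G$ are proper closed subgroups of $X_{f}$ and hence of Haar measure zero.

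For the reverse containment $G \cc (X_{f}, m_{X_{f}}) \prec$ Bernoulli, the $\ell^{2}$ formal inverse is used crucially. Identify $X_{f}$ with the closed $G$-invariant subgroup $\ker \widehat{r(f)} \subseteq \T^{G \times n}$, where $\T^{G \times n}$ with Haar measure is itself the Bernoulli shift with base $(\T^{n}, m)$. The formal inverse should give injectivity of $r(f)$ on $\Z(G)^{\oplus n}$ and hence, by Pontryagin duality, surjectivity of $\widehat{r(f)}$ on $\T^{G \times n}$, producing a short exact sequence $0 \to X_{f} \to \T^{G \times n} \to \T^{G \times n} \to 0$ of compact groups. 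For each finite $F \subseteq G$, use the truncation $\xi_{F} = \xi \cdot \mathbf{1}_{F}$ to build an approximately $G$-equivariant section $s_{F}$ of $\widehat{r(f)}$, and from these define "approximate projections" $\Psi_{F}(y) := y - s_{F}(\widehat{r(f)}(y))$ from $\T^{G \times n}$ into small neighborhoods of $X_{f}$. These $\Psi_{F}$ are the candidate approximate factor maps from the Bernoulli shift onto $X_{f}$.

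The main obstacle is showing the convergence: for every finite $F' \subseteq G$ and every Borel $\phi \colon X_{f} \to A$ with $A$ finite, one needs $(\phi \circ \Psi_{F})^{F'}_{*} m_{\T^{G \times n}}$ to converge in total variation to $\phi^{F'}_{*} m_{X_{f}}$ as $F \nearrow G$. The truncation error $\lambda(f)\xi_{F} - \delta \otimes I$ is only $\ell^{2}$-small, not $\ell^{1}$-small, so $\Psi_{F}(y)$ lies in $X_{f}$ only in an $L^{2}$-average sense. I would attack this via Fourier analysis on $\T^{G \times n}$: the character group is $\Z(G)^{\oplus n}$, and $m_{X_{f}}$ viewed on $\T^{G \times n}$ has Fourier transform the indicator $\mathbf{1}_{r(f)\Z(G)^{\oplus n}}$. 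The plan is to compute $\widehat{(\Psi_{F})_{*} m_{\T^{G \times n}}}(\alpha)$ explicitly in terms of $\xi_{F}$, show it converges to this indicator on any fixed finite subset of $\Z(G)^{\oplus n}$ at a rate controlled by $\|\xi - \xi_{F}\|_{\ell^{2}}$, and then convert this character-level convergence into total-variation convergence on finite windows — which is the key technical challenge, likely requiring an averaging or smoothing step to upgrade $\ell^{2}$ control to the pointwise control implicit in total variation.
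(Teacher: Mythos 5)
Your overall architecture (Ab\'ert--Weiss for one containment, an approximation scheme built from the $\ell^{2}$ formal inverse for the other) matches the paper's, but both halves have genuine gaps at exactly the points where the real work lies.

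First, essential freeness. Your argument --- Koopman representation a multiple of $\lambda_{G}$, plus ``fixed-point sets are proper closed subgroups, hence null'' --- breaks down in two places. A proper closed subgroup of a compact group need not be Haar-null (a finite-index one has measure equal to the reciprocal of its index), and, more seriously, for a torsion element $g\in G$ the restriction of $\infty\cdot\lambda_{G}$ to $\langle g\rangle$ is a multiple of the regular representation of the finite group $\langle g\rangle$, which contains the trivial representation; so the spectral argument gives no obstruction to $g$ having a fixed-point set of positive measure. This is precisely why the paper does not argue this way: it proves the homoclinic group is dense (Proposition~\ref{P:mixing}), hence the action is mixing, and then invokes Tucker-Drob's theorem that mixing actions of countable groups are free modulo a finite kernel --- a result that ultimately rests on the Feit--Thompson theorem. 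That input cannot be replaced by the soft spectral argument you sketch.

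Second, the containment in Bernoulli. You correctly identify the obstacle --- the truncation error is only $\ell^{2}$-small, so your maps $\Psi_{F}$ land near $X_{f}$ only in an averaged sense --- and you explicitly leave open how to upgrade character-level convergence to total-variation convergence of finite-window distributions. That upgrade is not a technicality one can expect to push through directly; the paper avoids it entirely by a different device. It constructs, for each $m$ and each finitely supported real $\xi_{k}$ approximating the formal inverse, a measure $\mu_{m,\xi_{k}}$ on $(\T^{G})^{\oplus n}$ that is \emph{exactly} a factor of the finite-base Bernoulli shift $(\{-m,\dots,m\}^{G})^{n}$ (via $x\mapsto r(\xi_{k}^{*})x \bmod \Z$), computes its Fourier transform as an explicit product of Dirichlet-kernel factors, shows this depends weak$^{*}$-continuously on $\xi$ in the $\ell^{2}$ norm, and proves $\mu_{m,\xi}\to m_{X_{f}}$ weak$^{*}$ as $m\to\infty$. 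The passage from weak$^{*}$ convergence to weak containment is then handled by Bowen's Proposition 5.2: the set of invariant measures weakly contained in Bernoulli shifts is weak$^{*}$-closed. Without that closedness result (or an equivalent substitute), your scheme does not close; with it, you do not need total-variation control at all.
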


Note that this covers the case when $\lambda(f)$ is invertible in $B(\ell^{2}(G)^{\oplus n}).$
One direction of the proof of Theorem \ref{T:mainintro} relies on a result of Ab\'{e}rt-Weiss in \cite{AbertWeiss}, as well as the fact that the action is free modulo its kernel (which is a finite group). This last fact follows from the fact that these actions are mixing (essentially following from \cite{ChungLi} Lemma 5.4 or \cite{BowenLi} Theorem 4.1) as well as a stunning theorem of Tucker-Drob (see \cite{RobinAF}), which ultimately relies on the Feit-Thompson thoerem. In an earlier draft of this paper, we wrote an elementary proof of the fact that if $f\in M_{n}(\Z(G))$ has an $\ell^{2}$ formal inverse, then  $G\actson (X_{f},m_{X_{f}})$ is free modulo its kernel. We have elected not to include this proof so as to not introduce too many technalities that may distract from the main result of the paper.

To illustrate the breadth of our results, we mention a few examples of $f\in \Z(G)$ which have $\lambda(f)$ invertible. This is the case if $f$ is invertible in $\ell^{1}(G),$ which, by standard Banach algebra techniques, applies if $f=b-\sum_{x}a_{x}x$ with $\sum_{x}|a_{x}|<|b|.$ For examples where $\lambda(f)$ is invertible, but $f$ is not invertible in $\ell^{1}(G),$ suppose that $G$ is nonamenable and again consider $f=b-\sum_{x}a_{x}x,$  but now suppose that $\{y^{-1}x:x,y\in G,a_{x}\ne 0,a_{y}\ne 0\}$ generates a nonamenable subgroup of $G,$ and that $\sum_{x}|a_{x}|=b.$ In this case, it is well known that
\[\left\|\sum_{x}a_{x}\lambda(x)\right\|<b\]
and this again implies that $\lambda(f)$ is invertible. In  general it is not clear if $f$ is invertible in $\ell^{1}$ or not. However, if we  additionally assume that $a_{x}\geq 0$ for all $x\in G,$ then consideration of the homomorphism $\ell^{1}(G)\to \C$ defined by $\alpha\mapsto \sum_{g}\alpha_{g}$ shows that $f$ is not invertible in $\ell^{1}(G)$. This example is called the Harmonic model, since $X_{f}$ may be regarded as the space of Harmonic functions $f\colon G\to \R/\Z.$ The Harmonic model was studied in \cite{BowenLi} and it is related to Wired Spanning Forests and tree entropy as defined by Lyons in \cite{Lyons}.
Another example can be given as follows: assume there are $g,h\in G$ so that $g,h$ such that the semigroup generated by $g,h$ (but not necessarily the group generated by $g,h$) is a free semigroup. It then follows from Example A.1 of \cite{Li2} that
\[f=3e+(e-g-g^{2})h\]
has the property that $\lambda(f)$ is invertible, but $f$ is not invertible in $\ell^{1}(G).$

\textbf{Acknowledgments.} I thank the anonymous referee for their comments and suggestions, which improved the paper. I thank Lewis Bowen for interesting discusssions related to this problem. I thank Brandon Seward for suggesting that it may be interesting to find new examples of algebraic actions weakly equivalent to Bernoulli shifts.

\section{Preliminaries on $\ell^{2}$ formal inverses}\label{S:L2examples}

We start by introducing some notation. For any vector space $V,$ we wil think of $V^{\oplus n}$ as all functions $\{1,\dots,n\}\to V.$   For $f\in M_{n}(\C(G)),$ we let $\lambda(f),r(f)\in B(\ell^{2}(G)^{\oplus n})$ be defined as in the introduction. For $v\in V$ and an integer $n,$ we let $v\otimes\id_{n}\in M_{n}(V)$ be defined by $(v\otimes \id_{n})_{ij}=\delta_{i=j}v$ for $1\leq i,j\leq n.$ If the integer $n$ is clear from context, we will often write $\id$ instead of $\id_{n}.$  

We let $L(G)$ be the closure of $\lambda(\C(G))$ in the strong operator topology. Define $J\colon \ell^{2}(G)\to \ell^{2}(G)$ by $(J\xi)(g)=\overline{\xi(g^{-1})}.$  Given $x\in L(G),\xi\in \ell^{2}(G),$ we set $\xi x=J(xJ\xi).$ Note that if $f\in \C(G),$ then $\xi \lambda(f)=r(f)(\xi),$ and so this agrees with right multiplication by $f.$
For $x\in M_{n}(\ell^{2}(G)),$ we let $\widehat{x}=x\cdot (\delta_{e}\otimes \id).$ Note that $(\widehat{x})_{ij}=\widehat{x_{ij}}.$ We extend this notation to $M_{n}(\C(G))$ by setting $\widehat{\alpha}=\widehat{\lambda(\alpha)}$ for $\alpha\in M_{n}(\C(G)).$ For $\alpha\in \C(G)^{\oplus n},$ we define $\widehat{\alpha}\in \ell^{2}(G)^{\oplus n}$ by $\widehat{\alpha}(j)=\widehat{\alpha(j)}.$ Given $\xi\in M_{n}(\ell^{2}(G)),\alpha\in M_{n}(L(G))$ we define $\xi\alpha\in M_{n}(\ell^{2}(G)),\alpha\xi\in M_{n}(\ell^{2}(G))$ by
 \[(\xi\alpha)_{ij}=\sum_{k=1}^{n}\xi_{ik}\alpha_{kj},\,\, (\alpha\xi)_{ij}=\sum_{k=1}^{n}\alpha_{ik}\xi_{kj}\mbox{ for $1\leq i,j\leq n.$}\]
 We define $\|\cdot\|_{2}$ on $M_{n}(\ell^{2}(G))$ by $\|\xi\|_{2}^{2}=\sum_{1\leq i,j\leq n}\|\xi_{ij}\|_{2}^{2}.$ For $\xi\in M_{n}(\ell^{2}(G)),$ we define $\xi^{*}\in M_{n}(\ell^{2}(G))$ by $(\xi^{*})_{ij}(g)=\overline{\xi_{ji}(g^{-1})}.$

For $\xi\in M_{n}(\ell^{2}(G))$ we define $r(\xi)\colon \C(G)^{\oplus n}\to \ell^{2}(G)^{\oplus n}$ by
\[(r(\xi)\alpha)(l)=\sum_{k=1}^{n}\lambda(\alpha(k))\xi_{kl}.\]
It is straightforward to see that $r(f)r(\xi)=r(\xi\lambda(f))$ and $r(\xi)r(f)=r(\lambda(f)\xi)$ for $\xi\in M_{n}(c_{0}(G)),f\in M_{n}(\C(G)).$
It is easy to see that if $\alpha\in \C(G),f\in M_{m,n}(\C(G)),$ then
\[r(f)(\lambda(\alpha) \xi)=\lambda(\alpha) (\xi\lambda(f)),\lambda(f)(\xi\alpha)=(\lambda(f)\xi)\alpha\]
and that if $f_{1}\in M_{m,n}(Z(G)),f_{2}\in M_{n,k}(\Z(G)),$ then
\[\lambda(f_{1}f_{2})=\lambda(f_{1})\lambda(f_{2}), r(f_{1}f_{2})=r(f_{2})r(f_{1}).\]
We let $X_{f}$ be the Pontryagin dual of $\Z(G)^{\oplus n}/r(f)(\Z(G)^{\oplus m}).$  For $\alpha\in \Z(G)^{\oplus n}$ and $\theta\in (\T^{G})^{\oplus n}$ we define
\[\ip{\theta,\alpha}_{\T}=\sum_{1\leq l\leq n}\sum_{g\in G}\widehat{\alpha}(l)(g)\theta(l)(g).\]
This duality identifies $(\T^{G})^{\oplus n}$ with the Pontryagin dual of $\Z(G)^{\oplus n}.$

We now  prove some basic properties of $\ell^{2}$ formal inverses. One of our crucial results is that if $\lambda(f)\in M_{n}(\C(G)),\xi\in M_{n}(\ell^{2}(G))$ and $\lambda(f)\xi=\delta_{e}\otimes \id,$ then automatically $\xi \lambda(f)=\delta_{e}\otimes \id.$  We remark that essentially all of this can be proved by working in the algebra of measurable operators affiliated to $L(G)$ (see \cite{TakesakiII} IX.2 for the necessary definition). Indeed, having an $\ell^{2}$ formal inverse is equivalent to saying that $\lambda(f)$ is injective and that its inverse (as an unbounded operator) is in the noncommutative $L^{2}$-space of $L(G)$ with respect to the trace. To avoid this (somewhat technical) background, we will  reprove some known facts about $\ell^{2}$ inverses in a more elementary way (in particular, both Propositions \ref{P:basicpropertiesofformalinverse} and \ref{P:basics} can  be derived from known properties of the algebra of measurable operators).

 \begin{prop}\label{P:basics} Let $G$ be a countable, discrete group.
  \begin{enumerate}[(i)]
  \item  \label{I:fourierswitch} For any $\alpha,\beta\in M_{n}(L(G))$ we have that $\widehat{\alpha}\beta=\widehat{\alpha\beta}=\alpha\widehat{\beta}.$
  \item \label{I:fourier continuity} If  we have a sequence $x_{n}\in M_{n}(L(G))$ and $x_{n}\to x$ in the strong operator topology, then $\|\widehat{x_{n}}-\widehat{x}\|_{2}\to 0.$ 
  \item If $x\in M_{n}(L(G))$ and $\widehat{x}=0,$ then $x=0.$ \label{I:fourier injectivity}
   \item For $\xi\in M_{n}(\ell^{2}(G)),\alpha,\beta\in \C(G)^{\oplus n}$ we have $\ip{r(\xi)\alpha,\widehat{\beta}}=\ip{\widehat{\alpha},r(\xi^{*})\beta}.$ \label{I:l2adjoints}
  \end{enumerate}

 \end{prop}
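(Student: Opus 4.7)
The plan is to prove all four parts by unpacking the definitions and exploiting the fact that $\delta_e \in \ell^2(G)$ is a cyclic and separating trace vector for $L(G)$.

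Part (iii) is the cleanest: since $\widehat{x}_{ij} = x_{ij}\delta_e$, the assumption $\widehat{x} = 0$ gives $x_{ij}\delta_e = 0$ for all $i,j$. Since left multiplication by $L(G)$ commutes with the right action of $L(G)$ on $\ell^2(G)$, applying $y \in L(G)$ on the right gives $x_{ij}(\delta_e \cdot y) = (x_{ij}\delta_e)\cdot y = 0$. The orbit $\{\delta_e\cdot \lambda(g) : g \in G\}$ equals $\{\delta_g : g \in G\}$ (up to the $J$-convention) and spans $\ell^2(G)$ densely, so $x_{ij}=0$. Part (ii) is equally direct: writing $\widehat{x_n} - \widehat{x} = (x_n - x)(\delta_e \otimes \id)$, entrywise SOT convergence $x_n \to x$ yields $\|(x_n)_{ij}\delta_e - x_{ij}\delta_e\|_2 \to 0$, whence $\|\widehat{x_n} - \widehat{x}\|_2 \to 0$.

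For part (i), the equality $\alpha\widehat{\beta} = \widehat{\alpha\beta}$ is just associativity of the left action of $M_n(L(G))$ on $M_n(\ell^2(G))$:
\[(\alpha\widehat{\beta})_{ij} = \sum_k \alpha_{ik}(\beta_{kj}\delta_e) = \Bigl(\sum_k \alpha_{ik}\beta_{kj}\Bigr)\delta_e = (\alpha\beta)_{ij}\delta_e.\]
The second equality $\widehat{\alpha}\beta = \widehat{\alpha\beta}$ requires commuting the left and right actions. I would expand
\[(\widehat{\alpha}\beta)_{ij} = \sum_k (\alpha_{ik}\delta_e)\cdot \beta_{kj} = \sum_k \alpha_{ik}(\delta_e\cdot \beta_{kj}),\]
where the second equality uses that right multiplication by $L(G)$ commutes with left multiplication by $L(G)$. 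It then remains to verify $\delta_e\cdot y = y\delta_e = \widehat{y}$ for $y \in L(G)$. This I would check first on $y = \lambda(g)$ (both sides equal $\delta_g$), extend by linearity to $\C(G)$, and then to all of $L(G)$ by SOT-to-$\|\cdot\|_2$ continuity using part (ii). This reconciliation between left and right convolution conventions at $\delta_e$ is the main obstacle; once established, the two equalities in (i) fall out immediately.

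Part (iv) is a routine Fubini-type computation. Unwinding
\[\ip{r(\xi)\alpha,\widehat{\beta}} = \sum_{l,k}\ip{\lambda(\alpha(k))\xi_{kl},\lambda(\beta(l))\delta_e},\]
each summand rewrites as $\sum_{g,h}\alpha(k)(g)\overline{\beta(l)(gh)}\,\xi_{kl}(h)$ after expanding the convolution and applying Fubini. The right side $\ip{\widehat{\alpha},r(\xi^*)\beta}$ unwinds, via the definition $\xi^*_{lk}(u) = \overline{\xi_{kl}(u^{-1})}$, to the same sum up to the reindexing $h \leftrightarrow g^{-1}h$, yielding the claimed identity.
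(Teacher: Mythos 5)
Your proof is correct and follows essentially the same route as the paper's: part (i) via the tracial identity $\delta_{e}\cdot y=y\delta_{e}=\widehat{y}$, checked on $\C(G)$ and extended to $L(G)$ by SOT-to-$\|\cdot\|_{2}$ continuity; part (ii) by entrywise SOT convergence applied to $\delta_{e}$; and part (iv) by reducing to $n=1$ and a Fubini computation. The only divergence is in part (iii), where the paper invokes faithfulness of the canonical trace ($\|\widehat{x}\|_{2}^{2}=\tau(x^{*}x)$, citing L\"uck) while you give the equivalent and equally standard separating-vector argument ($x\delta_{e}=0$ forces $x\delta_{g}=0$ for all $g$ by commuting with the right action); both are fine.
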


 \begin{proof}
 (\ref{I:fourierswitch}): The fact that $\alpha\widehat{\beta}=\widehat{\alpha\beta}$ is tautological, so we only present the proof that $\widehat{\alpha}\beta=\widehat{\alpha\beta}.$  First observe  that $\delta_{e}\lambda(f)=\widehat{\lambda(f)}$ for any $f\in \C(G).$ It follows that $\delta_{e}\widehat{x}=\widehat{x}$ for any $x\in L(G)$ by approximating $x$ in the strong operator topology by elements of $\C(G).$ This implies (\ref{I:fourierswitch}) when $\alpha,\beta\in L(G).$ The general case reduces to the case $\alpha,\beta\in L(G)$ by a direct calculation.

 (\ref{I:fourier continuity}): This is automatic from the fact that if $T_{n}$ is a sequence in $M_{n}(B(\ell^{2}(G))$ and $T$ is any element of $M_{n}(B(\ell^{2}(G))),$ then $T_{n}\to T$ in the strong operator topology if and only if $(T_{n})_{ij}\to T_{ij}$ in the strong operator topology for all $1\leq i,j\leq n.$

 (\ref{I:fourier injectivity}): Since $\widehat{x}_{ij}=\widehat{x_{ij}},$ it is enough to handle the case $n=1.$ In this case, $\|\widehat{x}\|_{2}^{2}=\tau(x^{*}x),$ where $\tau\colon L(G)\to \C$ is defined by $\tau(x)=\ip{x\delta_{e},\delta_{e}}.$ By (\cite{Luck}, Theorem 1.9 (3)), we see that $\|\widehat{x}\|_{2}^{2}=0$ if and only if $x=0.$

(\ref{I:l2adjoints}): By a direct calculation, the case of general $n$ reduces to the case $n=1,$ so we assume that $n=1.$ It is straightforward to see that  for any $c\in \C(G)$  we have that $\ip{\xi,\widehat{c}}=\ip{\widehat{c^{*}},\xi^{*}}.$ Thus,
\[\ip{r(\xi)\alpha,\widehat{\beta}}=\ip{\lambda(\alpha)\xi,\lambda(\beta)\delta_{e}}=\ip{\xi,\lambda(\alpha^{*}\beta)\delta_{e}}=\ip{\lambda(\beta^{*}\alpha)\delta_{e},\xi^{*}}=\ip{\lambda(\alpha)\delta_{e},\lambda(\beta)\xi^{*}}=\ip{\widehat{\alpha},r(\xi^{*})\beta}.\]

 \end{proof}

 We present some basic properties of $\ell^{2}$ formal inverses. If $T\in B(\ell^{2}(G)^{\oplus n})$ is a normal operator and $\phi$ is a bounded Borel function, we let $\phi(T)$ be the operator defined  by Borel functional calculus (see \cite{Conway} IX.2 for the necessary background). If $T\in B(\ell^{2}(G)^{\oplus n})$ is an arbitrary operator, we let $|T|=(T^{*}T)^{1/2}.$ Note that these comments apply to elements of $M_{n}(L(G)),$ since we may view $B(\ell^{2}(G)^{\oplus n})\cong M_{n}(B(\ell^{2}(G)))$ in a natural way.

\begin{prop}\label{P:basicpropertiesofformalinverse} Let $G$ be a countable, discrete, group and $f\in M_{n}(\C(G)).$ Suppose that $\xi\in M_{n}(\ell^{2}(G))$ and $\lambda(f)\xi=\delta_{e}\otimes \id.$ Then
\begin{enumerate}[(i)]
\item $\lambda(f)$ is injective as an operator $\ell^{2}(G)^{\oplus n}\to \ell^{2}(G)^{\oplus n}$,  \label{I:injectivity}
\item The operator $L_{\lambda(f)}\colon M_{n}(\ell^{2}(G))\to M_{n}(\ell^{2}(G))$ given by $L_{\lambda(f)}(\zeta)=\lambda(f)\zeta$ is injective, \label{I:injective2}
\item $\xi \lambda(f)=\delta\otimes \id,$ \label{I:left/right nonsense}
\item $\xi\in M_{n}(\ell^{2}(G,\R)).$  \label{I:tha realness}

\end{enumerate}

\end{prop}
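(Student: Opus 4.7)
The plan is to prove (i)--(iv) in order; the serious step is (i), after which (ii)--(iv) follow quickly.

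For (i), my first move is to show $\lambda(f)$ has dense image on $\ell^{2}(G)^{\oplus n}$. Reading the $j$-th column of the equation $\lambda(f)\xi=\delta_e\otimes \id$ gives $\delta_e\otimes e_j\in \Im\lambda(f)$ for each $1\le j\le n$. Since $\lambda(f)\in M_n(L(G))$ commutes with the right regular representation of $G$ acting diagonally on $\ell^{2}(G)^{\oplus n}$, the subspace $\Im\lambda(f)$ is invariant under this commuting action; because $\{\rho(g)\delta_e:g\in G\}$ spans a dense subspace of $\ell^{2}(G)$, density of $\Im\lambda(f)$ follows. With this in hand, I invoke the polar decomposition $\lambda(f)=u|\lambda(f)|$ inside $M_n(L(G))$. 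Density of the range says $uu^*=I_n$. Since $M_n(L(G))$ is a finite von Neumann algebra with faithful tracial state $\tau_n$, the equality $\tau_n(u^*u)=\tau_n(uu^*)=n=\tau_n(I_n)$ forces $u^*u=I_n$ as well; so $u$ is unitary and $\ker\lambda(f)=\ker|\lambda(f)|=\{0\}$.

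Part (ii) is immediate from (i): as a left $M_n(L(G))$-module, $M_n(\ell^{2}(G))$ decomposes into $n$ copies of $\ell^{2}(G)^{\oplus n}$ (one per column), and $L_{\lambda(f)}$ acts as $\lambda(f)$ on each column, so its kernel is a direct sum of copies of $\ker\lambda(f)=\{0\}$. Part (iii) comes from the computation
\begin{equation*}
\lambda(f)\bigl(\xi\lambda(f)-\delta_e\otimes\id\bigr)=(\lambda(f)\xi)\lambda(f)-\widehat{\lambda(f)}=(\delta_e\otimes\id)\lambda(f)-\widehat{\lambda(f)}=0,
\end{equation*}
using that left multiplication by $\lambda(f)$ commutes with right multiplication by elements of $M_n(L(G))$, together with Proposition~\ref{P:basics}(\ref{I:fourierswitch}), which gives $(\delta_e\otimes\id)\lambda(f)=\widehat{\lambda(f)}$; (ii) then forces the bracket to vanish. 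For (iv), in the intended application $f$ has real coefficients, so componentwise complex conjugation yields $\lambda(f)\overline{\xi}=\overline{\lambda(f)\xi}=\delta_e\otimes\id$, and applying (ii) again gives $\overline{\xi}=\xi$.

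The main obstacle is the argument in (i). The hypothesis produces density of the range almost by inspection, but upgrading dense range to injectivity is a global statement; the cleanest route I see is the polar decomposition combined with the finite faithful trace on $M_n(L(G))$. The author notes this can alternatively be extracted from the general theory of measurable operators affiliated to $L(G)$, but the polar decomposition route keeps the argument self-contained, requiring only Borel functional calculus (already in the paper's toolkit) and the standard fact $\tau_n(u^*u)=\tau_n(uu^*)$ for partial isometries in a finite von Neumann algebra.
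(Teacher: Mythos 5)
Your proof is correct. Parts (i), (ii) and (iv) run essentially parallel to the paper's: for (i) the paper assumes $\ker\lambda(f)\neq 0$, uses the rank--nullity theorem for von Neumann dimension to get a nonzero projection $p$ onto $(\Im\lambda(f))^{\perp}$, and derives the contradiction $\widehat{p}=p\lambda(f)\xi=0$; your route (first $c_{c}(G)^{\oplus n}\subseteq\Im\lambda(f)$ by invariance under the right regular representation, then $uu^{*}=1\Rightarrow u^{*}u=1$ via the trace) appeals to the same finiteness of $M_{n}(L(G))$ in different clothing, and both are sound. Where you genuinely depart from the paper is (iii). The paper builds approximate inverses $\phi_{k}=p_{k}(|\lambda(f)|)u^{*}$ from the polar decomposition, uses the spectral theorem to show $\phi_{k}\lambda(f)\to 1$ strongly and $\widehat{\phi_{k}}\to\xi$ in $\|\cdot\|_{2}$, and then passes to the limit in $\widehat{\phi_{k}}\lambda(f)=\widehat{\phi_{k}\lambda(f)}$. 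You replace all of this with the algebraic cancellation $\lambda(f)\bigl(\xi\lambda(f)-\delta_{e}\otimes\id\bigr)=(\lambda(f)\xi)\lambda(f)-\widehat{\lambda(f)}=0$ followed by an appeal to (ii); the only inputs are Proposition \ref{P:basics} (\ref{I:fourierswitch}) and the associativity $a(\zeta b)=(a\zeta)b$ for $a,b\in M_{n}(L(G))$ and $\zeta\in M_{n}(\ell^{2}(G))$, which holds entrywise because right multiplication $\eta\mapsto J(xJ\eta)$ lands in the commutant $L(G)'$. This is considerably shorter, avoids functional calculus entirely, and loses nothing needed later in the paper (the approximants $\xi_{k}\in M_{n}(c_{c}(G,\R))$ used in the weak containment argument come from density of $c_{c}$ in $\ell^{2}$, not from the $\phi_{k}$). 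One last point: you correctly flag that (iv) requires $f$ to have real coefficients, which the stated hypothesis $f\in M_{n}(\C(G))$ does not literally guarantee; the paper's own proof of (iv) makes the same implicit restriction, so your treatment is consistent with it.
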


\begin{proof}

(\ref{I:injectivity}): Suppose that $\lambda(f)$ is not injective. By the rank-nullity theorem for von Neumann dimension (\cite{Luck} Theorem 1.12 (2)), and the fact that $f$ is a square matrix, we know that $\dim_{L(G)}(\Im\lambda(f))^{\perp})=\dim_{L(G)}(\ker \lambda(f))>0.$ So if we let $p\in M_{n}(L(G))$ be the projection onto the orthogonal complement of the image of $\lambda(f),$ we then have that $p\ne 0.$ Since $p\lambda(f)=0,$
\[\widehat{p}=p(\delta_{e}\otimes \id)=p\lambda(f)\xi=(p\lambda(f))\xi=0.\]
Thus $\widehat{p}=0,$ so $p=0$  by Proposition \ref{P:basics} (\ref{I:fourier injectivity}) and we have a contradiction.

(\ref{I:injective2}): Define $\Phi\colon (\ell^{2}(G)^{\oplus n})^{\oplus n}\to M_{n}(\ell^{2}(G))$ by 
\[\Phi(\xi)_{ij}=\xi(j)(i)\mbox{ for $1\leq i,j\leq n$}\]
It is straightforward to check that $\Phi$ is a unitary and that 
\[\Phi\lambda(f)^{\oplus n}=\lambda(f)^{\oplus n}\Phi.\]
So this follows from (\ref{I:injectivity}).

(\ref{I:left/right nonsense}): Let $\lambda(f)=u|\lambda(f)|$ be the polar decomposition (see \cite{Conway} Proposition VIII.3.11), as is well known (see \cite{Taka} Proposition 3.14) we have that $u,|\lambda(f)|\in M_{n}(L(G)).$ Note $u\big|_{\ker(\lambda(f)}$ induces an isometry $(\ker\lambda(f))^{\perp}\to \overline{\Im\lambda(f)}$ and hence, by part (\ref{I:injectivity}), we know that $u$ is a unitary operator on $\ell^{2}(G)^{\oplus n}$. Let $p_{k}$ be a sequence of real polynomials with $|p_{k}(t)|\leq \frac{1}{t}$ for $t\in [-\|\lambda(f)\|,\|\lambda(f)\|]$ and so that $p_{k}(t)\to \frac{1}{t}$ uniformly on compact subsets of $[-\|\lambda(f)\|,\|\lambda(f)\|]\setminus\{0\}.$ Let $\phi_{k}=p_{k}(|\lambda(f)|)u^{*}.$ Observe that $\phi_{k}\lambda(f)=p_{k}(|\lambda(f)|)|\lambda(f)|,\lambda(f)\phi_{k}=up_{k}(|\lambda(f)|)|\lambda(f)|u^{*}.$ 
Since $\lambda(f)$ is injective, we know that $\chi_{\{0\}}(|\lambda(f)|)=\Proj_{\ker(\lambda(f))}=0,$ so by the spectral theorem we have that $\chi_{(0,\infty)}(|\lambda(f)|)=1.$ Another application of the spectral theorem shows that (with limits being taken in the strong operator topology):
\[\lim_{k\to\infty}\lambda(f)\phi_{k}=\lim_{k\to\infty}p_{k}(|\lambda(f)|)|\lambda(f)|=\chi_{(0,\infty)}(|\lambda(f)|)=1,\lim_{k\to\infty}\lambda(f)\phi_{k}=u\chi_{(0,\infty)}(|\lambda(f)|)u^{*}=1,\]
with the last equality following because $u$ is a unitary. 

Now set $\xi_{k}=\phi_{k}(\delta_{e}\otimes \id).$ By the above limiting formulas, we have that:
\[\|\xi_{k}-\xi\|=\|\phi_{k}(\delta_{e}\otimes \id)-\xi\|=\|(\phi_{k}\lambda(f)-\id)\xi\|\to_{k\to\infty}0.\]
So by Proposition \ref{P:basics} (\ref{I:fourierswitch}),
\[\xi\lambda(f)=\lim_{k\to\infty}\xi_{k}\lambda(f)=\lim_{k\to\infty}\widehat{\phi_{k}}\lambda(f)=\lim_{k\to\infty}\widehat{\phi_{k}\lambda(f)}=\delta_{e}\otimes \id,\]
the last equality following because $\phi_{k}\lambda(f)\to_{k\to\infty}\id$ in the strong operator topology.


(\ref{I:tha realness}): Define a conjugate-linear map $C\colon M_{n}(\ell^{2}(G))\to M_{n}(\ell^{2}(G))$ by $(C\zeta)_{ij}=\overline{\zeta_{ij}}$ for $1\leq i,j\leq n.$ Since $f\in M_{n}(\Z(G))\subseteq M_{n}(\R(G)),$ we have that  $L_{\lambda(f)}C=CL_{\lambda(f)}$ So, 
\[L_{\lambda(f)}(C\xi)=CL_{\lambda(f)}\xi=C\lambda(f)\xi=C(\delta_{e}\otimes \id)=\delta_{e}\otimes \id=L_{\lambda(f)}(\xi).\]
Hence, by (\ref{I:injective2}), we know that $C\xi=\xi,$ i.e. $\xi\in M_{n}(\ell^{2}(G,\R)).$

%
%

\end{proof}

We close with a proposition which says that the actions we are considering are mixing. This argument is well known (see e.g. \cite{ChungLi} Lemma 5.4 or \cite{BowenLi} Theorem 4.1) and we only present it for completeness. Recall that if $G\actson X$ is an algebraic action, then the \emph{homoclinic group} of $X,$ denoted $\Delta(X),$ is defined by
\[\Delta(X)=\{x\in X:\lim_{g\to\infty}gx=1\}.\]

\begin{prop}\label{P:mixing} Let $G$ be a countable, discrete group and suppose that $f\in M_{n}(\Z(G))$ has an $\ell^{2}$ formal inverse. Then $X_{f}$ has a dense homoclinic subgroup and, in particular, the action $G\cc (X_{f},m_{X_{f}})$ is mixing.
\end{prop}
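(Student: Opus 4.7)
My plan is to construct explicit homoclinic points from the $\ell^{2}$ formal inverse $\xi$ of $f$, and then use Pontryagin duality to show that their $G$-translates generate a dense subgroup of $X_{f}$. Mixing will then follow from the standard fact that any algebraic action with a dense homoclinic subgroup is mixing.

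For each $1 \leq j \leq n$, I define $\theta^{(j)} \in (\T^{G})^{\oplus n}$ by $\theta^{(j)}(l)(g) = \xi_{lj}(g^{-1}) \bmod \Z$. By Proposition \ref{P:basicpropertiesofformalinverse}(\ref{I:tha realness}), $\xi \in M_{n}(\ell^{2}(G,\R))$, so each $\xi_{lj}$ lies in $c_{0}(G)$; hence $(g\theta^{(j)})(l)(h) = \xi_{lj}(h^{-1}g) \to 0$ in $\T$ as $g \to \infty$ for every fixed $l,h$, showing that $\theta^{(j)}$ is homoclinic in $(\T^{G})^{\oplus n}$ once we verify $\theta^{(j)} \in X_{f}$. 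To check the latter, I unwind the $\T$-pairing: for any $\alpha \in \Z(G)^{\oplus n}$,
\[
\ip{\theta^{(j)}, r(f)\alpha}_{\T} = \sum_{k,h} \widehat{\alpha}(k)(h) \Bigl(\sum_{l,s} \widehat{f_{kl}}(s)\, \xi_{lj}(s^{-1}h^{-1})\Bigr) \pmod{\Z}.
\]
The equation $\lambda(f)\xi = \delta_{e} \otimes \id$, written coordinate-wise as $\sum_{l,s}\widehat{f_{kl}}(s)\xi_{lj}(s^{-1}g) = \delta_{k=j}\delta_{g,e}$, applied at $g = h^{-1}$, collapses the inner double sum to the integer $\delta_{k=j}\delta_{h,e}$, so the whole pairing vanishes modulo $\Z$.

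The main step is the density of the subgroup $H \leq X_{f}$ generated by $\{s\theta^{(j)} : s \in G,\ 1 \leq j \leq n\}$. By Pontryagin duality this is equivalent to the implication: if $\beta \in \Z(G)^{\oplus n}$ pairs trivially modulo $\Z$ with every element of $H$, then $\beta \in r(f)(\Z(G)^{\oplus n})$. An analogous unwinding identifies $\ip{s\theta^{(j)}, \beta}_{\T}$ with $\Phi^{(j)}(s)$, where $\Phi \in \ell^{2}(G,\R)^{\oplus n}$ is the row vector defined by $\Phi^{(j)} = \sum_{l}\lambda(\beta(l))\xi_{lj}$. The hypothesis then says $\Phi^{(j)}$ is $\Z$-valued; the crucial observation is that a $\Z$-valued $\ell^{2}$ function has finite support, so $\Phi^{(j)} = \widehat{\gamma_{j}}$ for a unique $\gamma_{j} \in \Z(G)$. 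Writing $\gamma = (\gamma_{1},\dots,\gamma_{n}) \in \Z(G)^{\oplus n}$, right-multiplication of the identity $\Phi = \lambda(\beta)\xi$ by $\lambda(f)$ together with $\xi\lambda(f) = \delta_{e}\otimes \id$ (Proposition \ref{P:basicpropertiesofformalinverse}(\ref{I:left/right nonsense})) yields $\Phi\lambda(f) = \widehat{\beta}$, and translating right multiplication by $\lambda(f)$ into $r(f)$ via the paper's identity $\xi\lambda(a) = r(a)\xi$ gives $\widehat{r(f)\gamma} = \widehat{\beta}$, i.e.\ $r(f)\gamma = \beta$.

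Since $H \subseteq \Delta(X_{f})$, the homoclinic group is dense in $X_{f}$. Mixing follows by a standard duality argument: if some nontrivial $\alpha \in \Z(G)^{\oplus n}/r(f)(\Z(G)^{\oplus n})$ had an infinite stabilizer, pick $h_{n} \in \Stab(\alpha)$ with $h_{n} \to \infty$; then for any homoclinic $x \in \Delta(X_{f})$, the associated character satisfies $\chi_{\alpha}(x) = \chi_{\alpha}(h_{n}^{-1}x) \to \chi_{\alpha}(1) = 0$, forcing $\chi_{\alpha}$ to vanish on $\Delta(X_{f})$ and hence to be trivial, a contradiction. The main subtlety I anticipate is the index bookkeeping between the two formal-inverse equations: $\lambda(f)\xi = \delta_{e}\otimes\id$ governs membership in $X_{f}$ (after composing $\xi$ with the inversion map in the definition of $\theta^{(j)}$), while $\xi\lambda(f) = \delta_{e}\otimes\id$ drives the density step; a wrong choice of convention in either place introduces a transpose of $f$ that derails the computation.
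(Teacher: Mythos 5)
Your proposal is correct and follows essentially the same route as the paper: the points $\theta^{(j)}$ are exactly the images under the reduction map of the columns of $r(\xi^{*})$ applied to $\Z(G)^{\oplus n}$, membership in $X_{f}$ comes from $\lambda(f)\xi=\delta_{e}\otimes\id$, and density comes from $\xi\lambda(f)=\delta_{e}\otimes\id$ together with the observation that a $\Z$-valued $\ell^{2}$ function has finite support. The only cosmetic differences are that you unwind the adjoint identity of Proposition \ref{P:basics}(\ref{I:l2adjoints}) in coordinates and spell out the standard dense-homoclinic-implies-mixing argument that the paper delegates to a citation.
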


\begin{proof} Once we show that $\Delta(X_{f})$ is dense, the fact that the action is mixing is well-known, see e.g. \cite{BowenLi} Proposition 4.6. Define $q\colon \ell^{2}(G)^{\oplus n}\to (\T^{G})^{\oplus n}$ by $(q(\zeta))(l)(g)=\zeta(l)(g)+\Z$ for $1\leq l\leq n,$ $g\in G.$ Let $\xi$ be an $\ell^{2}$ formal inverse to $f.$ We claim that $q(r(\xi^{*})\Z(G)^{\oplus n})$ is a dense subgroup of $X_{f}.$ Clearly, $q(r(\xi^{*})\Z(G)^{\oplus n})$ is a subgroup of $(\T^{G})^{\oplus n}.$
By  Proposition \ref{P:basics} (\ref{I:l2adjoints}), for every  $\alpha,\beta\in \Z(G)^{\oplus n}$ we have 
\[\ip{r(f)\alpha,q(r(\xi^{*})b)}_{\T}=\ip{\widehat{r(f)\alpha},r(\xi^{*})\beta}=\ip{r(\xi)\widehat{r(f)\alpha},\widehat{\beta}}=\ip{r(\lambda(f)\xi)\widehat{\alpha},\widehat{\beta}}+\Z=\ip{\widehat{\alpha},\widehat{\beta}}+\Z=\Z.\]
This shows that $q(r(\xi^{*})\Z(G)^{\oplus n})\subseteq X_{f}.$ Since $r(\xi^{*})\C(G)^{\oplus n}\subseteq \ell^{2}(G)^{\oplus n}\subseteq c_{0}(G)^{\oplus n},$ it is clear that\\$q(r(\xi^{*})\Z(G))^{\oplus n})\subseteq\Delta(X_{f}).$~ 

	To see that $q(r(\xi^{*})\Z(G))^{\oplus n})$ is dense in $X_{f},$  suppose that $\alpha\in \Z(G)^{\oplus n}$ and $\ip{\alpha,r(\xi^{*})\beta}_{\T}=0$ for all $\beta\in \Z(G)^{\oplus n}.$ By the same computations as above, we have $\ip{r(\xi)\alpha,\widehat{\beta}}\in \Z$ for all $\beta\in \Z(G)^{\oplus n}.$ Applying this observation to $\beta=e\otimes e_{j}$ for $j=1,\dots,n$ we see that $r(\xi)\alpha\in c_{c}(G,\Z)^{\oplus n}.$ So we may write $r(\xi)\alpha=\widehat{q}$ for some $q\in \Z(G)^{\oplus n},$ and thus
\[\widehat{\alpha}=r(f)r(\xi)\widehat{\alpha}=\widehat{r(f)q}.\]
This implies that $\alpha=r(f)q\in r(f)(\Z(G)^{\oplus n}).$

\end{proof}

We remark that this proof in fact show that $G\actson (X_{f},m_{X_{f}})$ is mixing if $f$ has a ``$c_{0}$ formal inverse" (the definition of a ``$c_{0}$ formal inverse" being an obvious modification of the definition of $\ell^{2}$ formal inverse, see for example \cite[Example 4.7]{BowenLi}).

\section{Weak Equivalence}

In this section, we present the proof of Theorem \ref{T:mainintro}. By Section \ref{S:L2examples}, we know that the  actions we are concerned with are all mixing actions. Hence, by \cite{RobinAF}  these actions are free (modulo their kernels, which are finite). By Ab\'{e}rt-Weiss' theorem (see \cite{AbertWeiss}) we can focus on showing that $G\cc (X_{f},m_{X_{f}})$ is weakly contained in a Bernoulli action. Our proof is a small modification of the strategy of Bowen in \cite{BowenEntropy}:  we start by embedding $X_{f}\subseteq (\T^{G})^{\oplus n}$ and regarding $m_{X_{f}}$ as a measure on $(\T^{G})^{\oplus n}.$ Following the method in \cite{BowenEntropy}, we will write $m_{X_{f}}$ as a weak$^{*}$-limit of measures on $(\T^{G})^{\oplus n}$ which are factors of Bernoulli measures.  Unfortunately we cannot directly follow the methods in \cite{BowenEntropy}, because we are not assuming that $f$ is invertible in $M_{n}(\ell^{1}(G)),$ and so we cannot directly convolve an element in $\ell^{\infty}(G)^{\oplus n}$ with the adjoint of the inverse of $f.$ We will instead convolve with the adjoints of ``approximate inverses'' of $f$ and write the Haar measure as a weak$^{*}$-limit of such measures.

We start by proving a lemma that will do most of the heavy lifting in our prof of weak containment. If $A$ is a countable abelian group, we define $\ev_{a}\colon \widehat{A}\to \C$ by $\ev_{a}(x)=\exp(2\pi i x(a)).$ Given $\mu\in \Prob(\widehat{A}),$ recall that the Fourier transform of $\mu$ is a function $\widehat{\mu}\colon A\to \C$ defined by:
\[\widehat{\mu}(a)=\int \ev_{a}\,d\mu.\]
Given a finite set $E,$ we let $u_{E}$ be the uniform probability measure on $E.$ 

\begin{lem}\label{L:heavylifting}
Let $G$ be a countable group and fix natural numbers $m,n.$ Then there is a unique function
\[\Phi_{m}\colon M_{n}(\ell^{2}(G,\R))\to \Prob_{G}((\T^{G})^{\oplus n}),\]
such that, denoting $\mu_{m,\xi}=\Phi_{m}(\xi),$ we have
\[\widehat{\mu}_{m,\xi}(\alpha)=\prod_{\substack{g\in G,\\ 1\leq l\leq n}}\frac{\sin((2m+1)\pi (r(\xi)\alpha)(l)(g))}{(2m+1)\sin(\pi (r(\xi)\alpha)(l)(g))},\mbox{ for all $\alpha\in \Z(G)^{\oplus n}$}.\]
Moreover:
\begin{enumerate}[(i)]
\item  $\Phi_{m}$ is continuous if we give $\ell^{2}(G)^{\oplus n}$ the norm topology and $\Prob_{G}((\T^{G})^{\oplus n})$ the weak$^{*}$ topology, \label{I:weak* continuity}
\item if $\xi\in M_{n}(c_{c}(G,\R)),$ then there is a $G$-equivariant map $\rho\colon (\{-m,\dots,m\}^{G})^{n}\to (\T^{G})^{\oplus n}$ so that \label{I:Bernoulli factor}
\[\rho_{*}((u_{\{-m,\dots,m\}}^{G})^{ n})=\mu_{m,\xi}.\]
\end{enumerate}

\end{lem}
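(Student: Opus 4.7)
The plan is to construct $\mu_{m,\xi}$ explicitly for $\xi \in M_n(c_c(G,\R))$ as a $G$-equivariant pushforward of the Bernoulli shift measure (which simultaneously yields (ii)), and then extend to general $\xi \in M_n(\ell^2(G,\R))$ by a density and weak*-compactness argument. Uniqueness of $\Phi_m$ will follow at once from the fact that the Fourier transform on the dual group $\Z(G)^{\oplus n}$ determines any measure on the compact abelian group $(\T^G)^{\oplus n}$.

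For $\xi \in M_n(c_c(G,\R))$, I would define a map $\rho \colon (\{-m,\ldots,m\}^G)^n \to (\T^G)^{\oplus n}$ by
$$\rho(\omega)(l)(g) = \sum_{k=1}^n \sum_{h \in G} \xi_{lk}(g^{-1}h)\, \omega(k)(h) \pmod{\Z},$$
a finite sum since $\xi_{lk}$ has finite support. A routine reindexing shows $\rho$ is $G$-equivariant, so the pushforward $\mu_{m,\xi} := \rho_*(u_{\{-m,\ldots,m\}}^G)^{n}$ is $G$-invariant. To verify the Fourier transform formula for $\alpha \in \Z(G)^{\oplus n}$, I would rewrite the duality pairing as
$$\langle \alpha, \rho(\omega)\rangle_\T = \sum_{k,h} \omega(k)(h)\, (r(\xi)\alpha)(k)(h) \pmod{\Z};$$
independence of the Bernoulli coordinates then factors the integral into a product over $(k,h)$, each factor being the characteristic function of the uniform distribution on $\{-m,\ldots,m\}$ evaluated at the real number $(r(\xi)\alpha)(k)(h)$, which is precisely the Dirichlet-type quotient $\sin((2m+1)\pi t)/((2m+1)\sin(\pi t))$ in the statement. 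This simultaneously establishes the Fourier formula, $G$-invariance, and (ii).

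To extend to general $\xi \in M_n(\ell^2(G,\R))$, I first interpret the infinite product and show it is continuous in $\xi$. The function $\phi_m(t) = \sin((2m+1)\pi t)/((2m+1)\sin(\pi t))$ is real, $1$-periodic, bounded by $1$ in absolute value, and satisfies $1-\phi_m(t) = O_m(t^2)$ near the integers; since each $\lambda(\alpha(k))$ is bounded on $\ell^2$, we have $r(\xi)\alpha \in \ell^2(G)^{\oplus n}$, so the product $\prod_{l,g} \phi_m((r(\xi)\alpha)(l)(g))$ converges absolutely and depends continuously on $r(\xi)\alpha$ in the $\ell^2$ topology. In particular, if $\xi_j \to \xi$ in $M_n(\ell^2(G,\R))$ with $\xi_j \in M_n(c_c(G,\R))$, then $r(\xi_j)\alpha \to r(\xi)\alpha$ in $\ell^2$, and the previously constructed measures $\mu_{m,\xi_j}$ form a weak*-precompact sequence in $\Prob((\T^G)^{\oplus n})$ whose Fourier transforms converge pointwise on $\Z(G)^{\oplus n}$ to the formula in the statement. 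Uniqueness of Fourier transforms on compact abelian groups forces the sequence to converge weak* to a single probability measure $\mu_{m,\xi}$ independent of the approximating sequence, with Fourier transform given by the product formula; $G$-invariance of $\mu_{m,\xi}$ is inherited, and can alternatively be read off directly from the invariance of its Fourier transform using the identity $(r(\xi)(g\alpha))(l)(h) = (r(\xi)\alpha)(l)(g^{-1}h)$.

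Statement (i) is then the same argument: since both spaces are metrizable, sequential continuity at an arbitrary $\xi$ suffices and reduces to continuity of the Fourier transforms at each $\alpha \in \Z(G)^{\oplus n}$, which we have just established. The main technical obstacle is precisely verifying absolute convergence and continuity of the infinite product for $\xi$ merely in $\ell^2$; this hinges on the quadratic vanishing $1-\phi_m(t) = O_m(t^2)$ near the integers combined with the $\ell^2$-regularity of $r(\xi)\alpha$. Once this is in place, uniqueness of Fourier transforms on compact abelian groups delivers both the existence of $\Phi_m$ and its uniqueness.
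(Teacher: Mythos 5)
Your proposal is correct and follows essentially the same route as the paper: an explicit $G$-equivariant pushforward of the Bernoulli measure for $\xi\in M_{n}(c_{c}(G,\R))$ (your formula for $\rho$ is exactly $q(r(\xi^{*})\,\cdot\,)$, and the adjoint identity $\ip{r(\xi^{*})x,\widehat{\alpha}}=\ip{\widehat{x},r(\xi)\alpha}$ factors the integral into the Dirichlet-kernel product), followed by extension to all of $M_{n}(\ell^{2}(G,\R))$ using $\ell^{2}$-continuity of the infinite product and the fact that pointwise limits of Fourier transforms of probability measures on a compact metrizable abelian group are again Fourier transforms of probability measures. The only place you compress the paper's argument is the continuity of $\zeta\mapsto\prod_{l,g}\phi_{m}(\zeta(l)(g))$ in $\ell^{2}$, which the paper proves carefully by taking logarithms off a finite exceptional set and using a bound of the form $|\phi_{m}(x)-\phi_{m}(y)|\leq A|x^{2}-y^{2}|$ for small $x,y$; you correctly identify the quadratic vanishing $1-\phi_{m}(t)=O_{m}(t^{2})$ as the driving estimate, and the remaining details are routine.
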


Remark: It is part of the statement of the Lemma that the product on the right hand side converges.
\begin{proof}
Define $\Psi\colon \ell^{2}(G,\R)^{\oplus n}\to \R$
by
\[\Psi(\xi)=\prod_{\substack{g\in G,\\ 1\leq l\leq n}}\frac{\sin((2m+1)\pi \xi(l)(g))}{(2m+1)\sin(\pi \xi(l)(g))}.\]
We make the following three claims:

\emph{Claim 1: for every $\xi\in \ell^{2}(G,\R)^{\oplus n},$ the product defining $\Psi(\xi)$ converges,}

\emph{Claim 2: $\Psi$ is continuous if we give $\ell^{2}(G,\R)^{\oplus n}$ the norm topology,}

\emph{Claim 3: for every $\xi\in M_{n}(c_{c}(G,\R)),$ there is a $\mu_{m,\xi}\in \Prob_{G}((\T^{G})^{\oplus n})$ with $\widehat{\mu}_{m,\xi}(\alpha)=\Psi(r(\xi)\alpha)$ for every $\alpha\in\Z(G)^{\oplus n},$ and so that $\mu_{m,\xi}$ is a factor of the Bernoulli shift measure on $(\{-m,\dots,m\}^{G})^{n}$}

Suppose we grant these three claims. Define a function $\widetilde{\Psi}\colon M_{n}(\ell^{2}(G))\to \R^{\Z(G)^{\oplus n}}$ by $\widetilde{\Psi}(\xi)(\alpha)=\Psi(r(\xi)\alpha).$ By continuity of $\Psi,$ the map $\widetilde{\Psi}$ is continuous if we give $\R^{\Z(G)^{\oplus n}}$ the product topology. By abstract Fourier analysis, the map $\mathcal{F}\colon\Prob((\T^{G})^{\oplus n})\to \C^{\Z(G)^{\oplus n}}$ sending each measure to its Fourier transform is a homeomorphism onto its image (giving $\Prob((\T^{G})^{\oplus n})$ the weak$^{*}$ topology and $\C^{\Z(G)^{\oplus n}}$ the product topology). By Claim 3, we know that $\mathcal{F}(\Prob((\T^{G})^{\oplus n}))\supseteq \widetilde{\Psi}(M_{n}(c_{c}(G,\R))).$ By continuity of $\widetilde{\Psi},$ and the fact that $\mathcal{F}(\Prob((\T^{G})^{\oplus n}))$ is closed, we find that $\widetilde{\Psi}(M_{n}(\ell^{2}(G,\R)))\subseteq \mathcal{F}(\Prob((\T^{G})^{\oplus n}).$ The fact that $\mathcal{F}$ is a homeomorphism onto its image, the inclusion $\widetilde{\Psi}(M_{n}(\ell^{2}(G,\R)))\subseteq \mathcal{F}(\Prob((\T^{G})^{\oplus n})),$ and the continuity of $\widetilde{\Psi}$ easily imply the lemma. So we focus on proving Claims 1-3.

To prove Claim 1, fix a $\xi\in \ell^{2}(G,\R)^{\oplus n}$ and set
\[f(x)=\frac{\sin((2m+1)\pi x)}{(2m+1)\sin(\pi x)}.\]
By standard real analysis, to prove Claim 1 it is enough to show that
\[\sum_{\substack{g\in G,\\ 1\leq l\leq n}}\left|1-f(\xi(l)(g))\right|<\infty.\]
Writing out the first three terms in the power series expansion of $f,$ we see that there is a constant $C>0$ so that
$|1-f(x)|\leq Cx^{2}.$
So
\[\sum_{\substack{g\in G,\\ 1\leq l\leq n}}\left|1-f(\xi(l)(g))\right|\leq C\sum_{\substack{g\in G,\\ 1\leq l\leq n}}|\xi(l)(g)|^{2}<\infty,\]
as $\xi\in \ell^{2}(G,\R)^{\oplus n}.$ This proves Claim 1.

We now prove Claim 2, we only need a slightly more sophisticated argument than that of Claim $1$ to prove Claim 2. Choose a $B>0$ so that for  any $x,y\in \R$ with $|1-x|,|1-y|<\frac{1}{2}$ we have $|\log(x)-\log(y)|\leq B|x-y|.$
Note that $f$ is an even function, so it follows by expanding $f$ in a power series that we may find $A,\delta>0$ with
$|f(x)-f(y)|\leq A|x^{2}-y^{2}|,\mbox{ if $|x|,|y|\leq \delta.$}$
We may assume that $\delta>0$ is small enough so that $|1-f(x)|<\frac{1}{2}$ for all $x\in (-\delta,\delta).$  Now suppose that we are given a sequence $\xi_{k}\in \ell^{2}(G,\R)^{\oplus n}$ and a $\xi\in \ell^{2}(G,\R)^{\oplus n}$ with $\|\xi_{k}-\xi\|_{2}\to 0.$ Without loss of generality, we may assume that $\|\xi_{k}-\xi\|_{2}<\frac{\delta}{2}$ for all $k.$ Let
$E=\left\{(l,g)\in \{1,\dots,n\}\times G:|\xi(l)(g)|\geq \frac{\delta}{2}\right\}.$
 Then $E$ is a finite set, and
\begin{align*}
\left|\sum_{(l,g)\in E^{c}}\log(f(\xi(l)(g)))-\sum_{(l,g)\in E^{c}}\log(f(\xi_{k}(l)(g)))\right|&\leq B\sum_{((l,g)\in E^{c}}|f(\xi(l)(g))-f(\xi_{k}(l)(g))|\\
&\leq AB\sum_{((l,g)\in E^{c}}|\xi(l)(g)^{2}-\xi_{k}(l)(g)^{2}|\\
&\leq AB\|\xi-\xi_{k}\|_{2}\|\xi+\xi_{k}\|_{2}\\
&\leq AB\left(\|\xi\|_{2}+\frac{\delta}{2}\right)\|\xi-\xi_{k}\|_{2}\\
&\to_{k\to\infty} 0.
\end{align*}
Exponentiating, we have:
\[\lim_{k\to\infty}\prod_{(l,g)\in E^{c}}f(\xi_{k}(l)(g))=\prod_{(l,g)\in E^{c}}f(\xi(l)(g)).\]
As $E$ is finite, 
\[\lim_{k\to\infty}\prod_{(l,g)\in E}f(\xi_{k}(l)(g))=\prod_{(l,g)\in E}f(\xi(l)(g))\]
and this proves that $\Psi$ is continuous.

It thus remains to prove Claim 3. Let $q\colon c_{c}(G,\R)^{\oplus n}\to (\T^{G})^{\oplus n}$ be defined by
$q(\xi)(l)(g)=\xi(l)(g)+\Z,$ for all $1\leq l\leq n,$ $g\in G,$
and define
$\rho\colon (\{-m,\dots,m\}^{G})^{n}\to (\T^{G})^{\oplus n}$
by $\rho(x)(l)(g)=(q(r(\xi^{*})x))(l)(g).$ Set $\mu_{m,\xi}=\rho_{*}((u_{\{-m,\dots,m\}}^{ G})^{n}).$
For any $t\in \R$ we have, by a direction computation, that
\[\int_{\{-m,\dots,m\}}e^{2\pi i tj}\,du_{\{-m,\dots,m\}}(j)=\frac{\sin((2m+1)\pi t)}{(2m+1)\sin(\pi t)},\]
(see e.g \cite{BabyRudin} section 8.13). Thus
\begin{align*}\label{E:fouriertransformcalc}
\int_{(\T^{G})^{\oplus n}}\ev_{\alpha}\,d\mu_{m,\xi}&=\int_{(\{-m,\dots,m\}^{G})^{n}}e^{2\pi i \ip{r(\xi^{*})x ,\widehat{\alpha}}}\,d(u_{\{-m,\dots,m\}}^{ G})^{ n}(x)\\
&=\int_{(\{-m,\dots,m\}^{G})^{n}}e^{2\pi i \ip{\widehat{x} ,r(\xi)\alpha}}\,d(u_{\{-m,\dots,m\}}^{ G})^{ n}(x)\\ 
&=\prod_{1\leq l\leq n}\prod_{g\in G}\frac{\sin((2m+1)\pi(r(\xi)\alpha)(l)(g)))}{(2m+1)\sin(\pi (r(\xi)\alpha) (l)(g))}. 
\end{align*}

\end{proof}

\begin{thm} Let $G$ be a countable, discrete, group and suppose that $f\in M_{n}(\Z(G))$ has an $\ell^{2}$ formal inverse. Then $G\cc (X_{f},m_{X_{f}})$ is weakly equivalent to a Bernoulli shift.
\end{thm}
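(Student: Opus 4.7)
The weak equivalence splits into two containments. For the direction that every Bernoulli shift is weakly contained in $G\cc (X_f,m_{X_f})$, I would combine Proposition~\ref{P:mixing} with Tucker-Drob's theorem \cite{RobinAF} to conclude that $G\cc (X_f,m_{X_f})$ is free modulo its finite kernel, and then apply Ab\'{e}rt-Weiss \cite{AbertWeiss}. The bulk of the proof lies in the reverse containment.

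To show $G\cc (X_f,m_{X_f})$ is weakly contained in a Bernoulli shift, I would realize $m_{X_f}$ as a weak-$*$ limit of $G$-invariant probability measures on $(\T^G)^{\oplus n}$, each of which is a factor of a Bernoulli shift, using the family $\mu_{m,\xi}$ of Lemma~\ref{L:heavylifting}. Let $\xi\in M_n(\ell^2(G,\R))$ be the $\ell^2$ formal inverse of $f$, which is real by Proposition~\ref{P:basicpropertiesofformalinverse}(\ref{I:tha realness}), and approximate $\xi$ in $\|\cdot\|_2$-norm by a sequence $\xi_k\in M_n(c_c(G,\R))$. Lemma~\ref{L:heavylifting}(\ref{I:Bernoulli factor}) ensures that each $\mu_{m,\xi_k}$ is a factor of a Bernoulli shift, while Lemma~\ref{L:heavylifting}(\ref{I:weak* continuity}) gives $\mu_{m,\xi_k}\to\mu_{m,\xi}$ weak-$*$ as $k\to\infty$. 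The crucial remaining point is that $\mu_{m,\xi}\to m_{X_f}$ weak-$*$ as $m\to\infty$.

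I would verify this last convergence by comparing Fourier transforms on $\Z(G)^{\oplus n}$. When $\alpha=r(f)\beta$ lies in $r(f)(\Z(G)^{\oplus n})$, the identity $r(\xi)r(f)=r(\lambda(f)\xi)=r(\delta_e\otimes\id)$ gives $r(\xi)\alpha=\widehat{\beta}$, which is integer-valued, so every factor in the product formula for $\widehat{\mu}_{m,\xi}(\alpha)$ equals $1$ (the normalized Dirichlet kernel at an integer), and hence $\widehat{\mu}_{m,\xi}(\alpha)=1=\widehat{m_{X_f}}(\alpha)$. When $\alpha\notin r(f)(\Z(G)^{\oplus n})$, the argument from Proposition~\ref{P:mixing} implies $r(\xi)\alpha\notin c_c(G,\Z)^{\oplus n}$, so at least one entry $t=(r(\xi)\alpha)(l)(g)$ is a non-integer; the corresponding factor is bounded by $\frac{1}{(2m+1)|\sin(\pi t)|}\to 0$ as $m\to\infty$, while every other factor has modulus at most $1$ (being the Fourier coefficient of a probability measure), forcing $\widehat{\mu}_{m,\xi}(\alpha)\to 0=\widehat{m_{X_f}}(\alpha)$. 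Iterating the two limits in the metrizable weak-$*$ topology produces a sequence of measures, each a factor of a Bernoulli shift, converging weak-$*$ to $m_{X_f}$. Since weak containment on a fixed compact $G$-space is closed under weak-$*$ limits of invariant measures, this completes the proof. The main technical obstacle, that $f$ need not be $\ell^1$-invertible so one cannot literally convolve with $f^{-1}$ as in \cite{BowenEntropy}, is precisely what the $c_c$-approximations of the $\ell^2$ formal inverse are designed to bypass.
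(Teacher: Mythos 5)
Your proposal is correct and follows essentially the same route as the paper: mixing plus Tucker-Drob plus Ab\'{e}rt-Weiss for one containment, and for the other the approximation of the $\ell^{2}$ formal inverse by $c_{c}$ elements, the two-parameter family $\mu_{m,\xi_{k}}$ from Lemma \ref{L:heavylifting}, and the Fourier-transform computation showing $\mu_{m,\xi}\to m_{X_{f}}$ weak$^{*}$. The closure of weak containment under weak$^{*}$ limits that you invoke at the end is exactly the role played by Proposition 5.2 of \cite{BowenEntropy} in the paper's argument.
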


\begin{proof}
By Theorem 1.4 of \cite{RobinAF}, and  Proposition \ref{P:mixing}, we know that $G\actson (X_{f},m_{X_{f}})$ is free modulo its kernel, and that this kernel is finite. It thus follows from  \cite{AbertWeiss} that $G\cc (X_{f},m_{X_{f}})$ weakly contains any Bernoulli shift. So we only present the proof that $G\actons (X_{f},m_{X_{f}})$ is weakly contained in any Bernoulli shift.  We adopt notation as in Lemma \ref{L:heavylifting}.  Let $\xi$ be an $\ell^{2}$ formal inverse to $f.$ By Proposition \ref{P:basicpropertiesofformalinverse} (\ref{I:tha realness}), we may find a seqeuence $\xi_{k}\in M_{n}(c_{c}(G,\R))$ so that $\|\xi_{k}-\xi\|_{2}\to 0.$ By part (\ref{I:Bernoulli factor}) of Lemma \ref{L:heavylifting}, we know that each $G\actson ((\T^{G})^{\oplus n}, \mu_{m,\xi_{k}})$ is weakly contained in any Bernoulli shift. So by Proposition 5.2 of \cite{BowenEntropy} and Lemma \ref{L:heavylifting} (\ref{I:weak* continuity}), it is enough to show that
\begin{equation}\label{E:its what we have to prove yo!}
\lim_{m\to\infty}\mu_{m,\xi}=m_{X_{f}} \mbox{ weak$^{*}$}.
\end{equation}
 Using  Lemma \ref{L:heavylifting} and the fact that $\{\ev_{\alpha}:\alpha\in \Z(G)^{\oplus n}\}$ spans a norm dense subset of $(\T^{G})^{\oplus n},$ to prove (\ref{E:its what we have to prove yo!}) we simply have to show that
\[\lim_{m\to\infty}\prod_{1\leq l\leq n}\prod_{g\in G}\frac{\sin((2m+1)\pi(r(\xi)\alpha)(l)(g)))}{(2m+1)\sin(\pi(r(\xi)\alpha)(l)(g))}=\begin{cases}
1,&\textnormal{ if $\alpha\in r(f)(\Z(G)^{\oplus n}),$}\\
0,&\textnormal{ if $\alpha\notin r(f)(\Z(G)^{\oplus n}).$}
\end{cases}\]

	Suppose that $\alpha\in r(f)(\Z(G)^{\oplus n})$ and write $\alpha=r(f)\beta$ with $\beta\in \Z(G)^{\oplus n}.$ Then $r(\xi)\alpha=r(\xi)r(f)\beta=r(\lambda(f)\xi)\beta=\widehat{\beta}$,
so $r(\xi)\alpha\in c_{c}(G,\Z)^{\oplus n}$, and 
	\[\prod_{1\leq l\leq n}\prod_{g\in G}\left(\frac{\sin((2m+1)\pi(r(\xi)\alpha)(l)(g))}{(2m+1)\sin(\pi(r(\xi)\alpha)(l)(g))}\right)=1\]
for every $m.$

	Suppose that $\alpha\notin r(f)(\Z(G)^{\oplus n}).$ By Proposition \ref{P:basicpropertiesofformalinverse} (\ref{I:tha realness}) we have $r(f)r(\xi)\alpha=\widehat{\alpha},$ and since  $\alpha\notin r(f)(\Z(G)^{\oplus n}),$ we must have that $r(\xi)\alpha\notin c_{c}(G,\Z)^{\oplus n}$ . Hence we can find an integer $1\leq l_{0}\leq n$, and a $g_{0}\in G$ so that $(r(\xi)\alpha)(l_{0})(g_{0})\notin \Z.$ As
	\[\int e^{2\pi i j(r(\xi)\alpha)(l)(g) }\,du_{\{-m,\dots,m\}}(j)=\frac{\sin((2m+1)\pi(r(\xi)\alpha)(l)(g))}{(2m+1)\sin(\pi(r(\xi)\alpha)(l)(g))}\]
we have that
	\[\left|\frac{\sin((2m+1)\pi(r(\xi)\alpha)(l)(g))}{(2m+1)\sin(\pi(r(\xi)\alpha)(l)(g)))}\right|\leq 1,\]
for every $1\leq l\leq n,g\in G,m\in \N.$
	So
	\[\left|\prod_{1\leq l\leq n}\prod_{g\in G}\left(\frac{\sin((2m+1)\pi(r(\xi)\alpha)(l)(g))}{(2m+1)\sin(\pi(r(\xi)\alpha)(g))}\right)\right|\leq \left|\frac{\sin((2m+1)\pi(r(\xi)\alpha)(l_{0})(g_{0}))}{(2m+1)\sin(\pi(r(\xi)\alpha)(l_{0})(g_{0}))}\right|\to_{m\to\infty}0,\]
	the last step following as $(r(\xi)\alpha)(l_{0})(g_{0})\notin \Z.$ This completes the proof.

\end{proof}

%

\end{document}